\newtheorem{theorem}{Theorem}[section]
\newtheorem{lemma}[theorem]{Lemma}
\theoremstyle{definition}
\newtheorem{definition}[theorem]{Definition}
\theoremstyle{remark}
\newtheorem{remark}[theorem]{Remark}
\numberwithin{equation}{section}
\def\H {{\mathcal H}}
\def\Z {{\mathbb Z}}
\def\Q {{\mathbb Q}}
\begin{document}

\title[Multiplicative Bases]{Multiplicative Bases for the Centres of the Group Algebra and Iwahori-Hecke Algebra of the Symmetric Group}

\author{Andrew Francis}
\address{School of Computing, Engineering and Mathematics, University of Western Sydney, NSW 2751, Australia}
\email[Andrew~Francis]{a.francis@uws.edu.au}

\thanks{The first author was supported by Australian Research Council Future Fellowship FT100100898.}

\author{Lenny Jones}
\address{Department of Mathematics, Shippensburg University, Pennsylvania, USA}
\email[Lenny~Jones]{lkjone@ship.edu}
\thanks{Both authors have benefited greatly from the mentoring of Leonard Scott --- in the case of the first author as a postdoc mentor, and in the case of the second author as a PhD advisor.  They would like to record their gratitude for his patient and wise tutelage.}

\dedicatory{Dedicated to Leonard Scott on the occasion of his retirement.}
\subjclass[2010]{Primary 20C08; Secondary 20B30, 11D61}
\keywords{Iwahori--Hecke algebra, centre,  symmetric group.}

\begin{abstract}
Let $\H_n$ be the Iwahori-Hecke algebra of the symmetric group $S_n$, and let $Z(\H_n)$ denote its centre.
Let $B=\left\{b_1,b_2,\ldots ,b_t\right\}$ be a basis for $Z(\H_n)$ over $R=\Z[q,q^{-1}]$.
 Then $B$ is called \emph{multiplicative} if, for every $i$ and $j$, there exists $k$ such that $b_ib_j= b_k$.
 In this article we prove that no multiplicative bases for $Z(\Z S_n)$ and $Z(\H_n)$ when $n\ge 3$. In addition, we prove that there exist exactly two multiplicative bases for $Z(\Z S_2)$ and none for $Z(\H_2)$.
\end{abstract}

\maketitle

\section{Introduction}

Quite a bit is known about the integral linear structure of the centres of Iwahori--Hecke algebras of finite Coxeter groups over the ring $R=\Z [q,q^{-1}]$.  For instance, they are known to have a basis consisting of elements that specialize to conjugacy class sums~\cite{GR97}.
Our focus here is on the Iwahori--Hecke algebra $\H_n$ of the symmetric group, where there is the suggestion of a multiplicative structure through a connection with symmetric polynomials of Jucys--Murphy elements~\cite{DJ87,Mat99,FG:DJconj2006}.  This connection has yielded an alternative integral basis involving symmetric polynomials~\cite{FJ:newintbasis}, but as yet no multiplicative relations.

The ability to determine exactly how the product of elements of a basis for $Z(\H_n)$, the centre of $\H_n$, decomposes as a linear combination of basis elements would be particularly useful in any homomorphic context. For instance, the Brauer homomorphism plays an important role in the representation theory of the symmetric group, and has been generalized to $\H_n$~\cite{Jon87}. The fact that the codomain of this generalized Brauer homomorphism can be realized as certain products of elements from $Z(\H_n)$ was instrumental in developing a Green correspondence for $\H_n$~\cite{Du.GreenCor92}. However, as yet, an explicit determination of the structure constants has not been achieved~\cite{FW-centres-filtered-2008}. Such a description would provide additional insight into the representation theory of $\H_n$. To this end, it is reasonable to search for bases for $Z(\H_n)$ that have nice multiplicative properties.

In this paper we address the question put by Jie Du (private communication) of whether $Z(\H_n)$ might have a basis over $R$ that is \emph{multiplicative} (closed under the multiplication of the algebra).
We answer this question in the negative for $Z(\H_n)$ when $n\ge 3$ by showing that no multiplicative bases exist for $Z(\Z S_n)$, the centre of the group algebra. We also prove that there exist exactly two multiplicative bases for $Z(\Z S_2)$ and none for $Z(\H_2)$. We point out that multiplicative bases do exist for $Z(\Q S_n)$ and for $Z(\H_n)$ over $\Q(q)$ for all $n$. For example, nested sums of central primitive orthogonal idempotents do the job.

\section{Definitions and Notation}
Let $S_n$ be the symmetric group on $\{1,\dots,n\}$ with generating set of simple reflections
\[S:=\{(i\ \ i+1)\mid 1\le i\le n-1\}.\]
 Let $\Z S_n$ be the symmetric group algebra over the integers. The centre of $\Z S_n$, which we denote $Z(\Z S_n)$, has dimension equal to the number of partitions of $n$, and a $\Z$-basis consisting of conjugacy class sums. Considered over the rational numbers, $Z(\Q S_n)$ also contains a set of primitive orthogonal idempotents $e_\lambda$ in correspondence with the irreducible characters $\chi_{\lambda}$ for $\lambda\vdash n$.  These idempotents form another basis for the centre of $\Q S_n$, and can be expressed in terms of the irreducible characters as follows (see e.g.~\cite{CurtisReiner87v1}):
\[
e_\lambda=\frac{\chi_\lambda(1)}{n!}\sum_{w\in S_n}\chi_\lambda(w)w.
\]
Since the characters are constant on conjugacy classes, this expression can be re-written in terms of class sums $\underline{C}$:
\begin{equation}\label{idempotent:class.elts1}
e_\lambda=\frac{\chi_\lambda(1)}{n!}\sum_{C}\chi_\lambda({w_C})\underline{C}
\end{equation} 
where $w_C$ is a representative of the conjugacy class $C$.
\begin{remark}\label{Rem: Idempotent Notation}
  When the correspondence of a partition to its particular irreducible character, or conjugacy class is not needed, we simply use the notation $\chi_i$, $e_i$ or $C_i$ for an irreducible character, idempotent or conjugacy class, respectively. We also write $w_j$ for an element of the class $C_j$.
\end{remark}

Using Remark \ref{Rem: Idempotent Notation},
we can rewrite (\ref{idempotent:class.elts1}) as
\begin{equation}\label{idempotent:class.elts2}
e_i=\frac{\chi_i(1)}{n!}\sum_{j=1}^t\chi_i(w_j)\underline{C_j}.
\end{equation}
\begin{definition}
Let $R=\Z[q,q^{-1}]$, where $q$ is an indeterminate.
The \emph{Iwahori--Hecke algebra} $\H_n$ of $S_n$ is the unital associative $R$-algebra with generators $\{T_i\mid 1\le i\le n-1\}$, where $T_i$ corresponds to the simple reflection $(i\ \ i+1)\in S$, subject to the relations
\begin{align*}
 T_{i}T_{j}&=T_{j}T_{i} &\text{if $|i-j|\ge 2$}\ \\
 T_{i}T_{i+1}T_{i}&=T_{i+1}T_{i}T_{i+1} &\text{for $1\le i\le n-2$}\ \\
 T_{i}^2&=q+(q-1)T_{i}   &\text{for $1\le i\le n-1$.}
\end{align*}
\end{definition}

The Iwahori--Hecke algebra $\H_n$ is a deformation of the symmetric group algebra $\Z S_n$ \cite{CurtisReiner87v2}. In particular, the specialization of $\H_n$ at $q=1$ is isomorphic to $\Z S_n$. It follows that any basis for $Z(\H_n)$ over $R$ specializes at $q=1$ to a basis for $Z(\Z S_n)$. We denote the specialization of an element $h\in \H_n$, the specialization of every element of a subset $W$ of $\H_n$, or the specialization of every entry in a matrix $M$, at $q=1$ as $h|_{q=1}$, $W|_{q=1}$ or $M|_{q=1}$, respectively. Because of the isomorphism $\H_n|_{q=1}\simeq \Z S_n$, we think of the specialization of any element in $\H_n$ as being an element of $\Z S_n$.

\begin{definition}\label{Def:mult}
  A basis $B=\left\{b_1,b_2,\ldots ,b_t\right\}$ for $Z(\H_n)$ over $R$ is called \emph{multiplicative} if, for every $i$ and $j$, there exists $k$ such that $b_ib_j= b_k$. Specialization at $q=1$ gives analogous definitions for $Z(\Z S_n)$.
\end{definition}

\section{Multiplicative Bases}
The following preliminaries are needed to establish our main results.

\begin{lemma}\label{Lem:mult form}
     Let $\{e_1,e_2,\ldots ,e_t\}$ be a complete set of central primitive idempotents for $Z(\Q S_n)$.
  Let $B$ be a multiplicative basis for $Z(\Z S_n)$ and let $b\in B$. Then
  \[b=\sum_{i=1}^t\varepsilon_ie_i,\] where $\varepsilon_i \in \{-1,0,1\}$.
\end{lemma}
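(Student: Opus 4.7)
The plan is to exploit two facts: the orthogonal idempotents $e_i$ diagonalize multiplication in $Z(\mathbb{Q}S_n)$, and a multiplicative basis $B$ is a finite set closed under multiplication, which forces the orbit $\{b, b^2, b^3, \ldots\}$ to repeat.

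First I would observe that since $\{e_1, \ldots, e_t\}$ is a $\mathbb{Q}$-basis for $Z(\mathbb{Q}S_n)$ and $Z(\mathbb{Z}S_n) \subset Z(\mathbb{Q}S_n)$, any $b \in B$ can be written uniquely as
\[ b = \sum_{i=1}^t c_i e_i, \qquad c_i \in \mathbb{Q}. \]
The goal is to show each $c_i \in \{-1,0,1\}$. Using the orthogonality relations $e_i e_j = \delta_{ij} e_i$, for any positive integer $m$ we have
\[ b^m = \sum_{i=1}^t c_i^m e_i. \]

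Next I would use the hypothesis that $B$ is multiplicative. Closure under multiplication forces $b^m \in B$ for every $m \ge 1$, and since $B$ is finite (of cardinality $t$, the number of partitions of $n$), the pigeonhole principle gives positive integers $m < N$ with $b^m = b^N$. Comparing coefficients in the idempotent basis yields
\[ c_i^m = c_i^N \quad \text{for each } i=1,\ldots,t. \]
Hence for each $i$, either $c_i = 0$ or $c_i^{N-m} = 1$, so $c_i$ is a rational root of unity. The only rational roots of unity are $\pm 1$, so $c_i \in \{-1,0,1\}$, which is what we wanted.

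There is no real obstacle here; the argument is essentially a one-liner once one notices that multiplicativity plus finiteness of $B$ gives eventual periodicity of the powers of $b$ in the idempotent basis. The only point worth double-checking is that the expansion coefficients of $b$ in the $e_i$ basis really are rational, which follows because $\{e_i\}$ is a $\mathbb{Q}$-basis of $Z(\mathbb{Q}S_n)$ and $b$ lies in this space. The same lemma (with the same proof) will then feed into the main theorem, where the constraint $\varepsilon_i \in \{-1,0,1\}$ severely restricts what a multiplicative basis can look like.
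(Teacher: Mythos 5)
Your proof is correct and is essentially the paper's own argument: expand $b$ in the orthogonal idempotent basis, use finiteness of the multiplicative set $B$ to get $b^m=b^N$ for some $m<N$, and conclude that each coefficient satisfies $c_i^m=c_i^N$, hence is $0$ or $\pm 1$. No substantive differences.
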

\begin{proof} Consider the sequence $b, b^2, b^3, b^4,\ldots $.  Since $B$ is  multiplicative and finite, there exist distinct nonnegative integers $k$ and $m$, and $\hat{b}\in B$, such that
\[b^{k}=\hat{b}=b^{m}.\]
Since $\left\{e_1,e_2,\ldots ,e_t\right\}$ is a basis for $Z(\Q S_n)$,
we can write $b=\sum_{i=1}^tw_ie_i$, where $w_i\in \Q$. Then
\[\sum_{i=1}^{t}w_i^{k}e_i=b^k=\hat{b}=b^m=\sum_{i=1}^{t}w_i^{m}e_i,\]
so that $w_i^{k}=w_i^{m}$ for all $i$. Hence, for $w_i\ne 0$, we have that $w_i^{k-m}=1$, which implies that $w_i=\pm 1$ and the lemma is proven.
\end{proof}

The following theorem, which we state without proof, is due to W. Burnside \cite{Isaacs}.
 \begin{theorem}[Burnside]\label{Thm:Burnside}
   Let $G$ be a finite group and let $\chi$ be a nonlinear irreducible character of $G$. Then there exists $g\in G$ such that $\chi(g)=0$.
 \end{theorem}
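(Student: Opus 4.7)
The plan is to argue by contradiction: suppose $\chi(g)\neq 0$ for every $g\in G$, and derive a contradiction with the nonlinearity of $\chi$. The key object is the product $A=\prod_{g\in G}\chi(g)$, which is a nonzero complex number under this assumption. First I would show $A\in\Z\setminus\{0\}$. Each value $\chi(g)$ is a sum of $\chi(1)$ roots of unity, hence an algebraic integer, so $A$ is an algebraic integer. To pin down rationality, let $n$ be the exponent of $G$ and consider the action of $\mathrm{Gal}(\Q(\zeta_n)/\Q)$: any automorphism $\sigma$ sends $\zeta_n$ to $\zeta_n^k$ for some $k$ coprime to $n$, and correspondingly sends $\chi(g)$ to $\chi(g^k)$. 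Because $\gcd(k,n)=1$, the map $g\mapsto g^k$ is a bijection of $G$ (its inverse is $g\mapsto g^{k'}$ for $kk'\equiv 1\pmod n$, using $g^n=1$), so $\sigma$ permutes the multiset $\{\chi(g):g\in G\}$ and fixes $A$. Hence $A\in\Q\cap\overline{\Z}=\Z$, and nonvanishing of every factor forces $\abs{A}\ge 1$.

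Next I would apply the AM--GM inequality to the nonnegative reals $\abs{\chi(g)}^2$, invoking the first orthogonality relation $\tfrac{1}{\abs{G}}\sum_{g\in G}\abs{\chi(g)}^2=1$:
\[
\abs{A}^{2/\abs{G}}=\Bigl(\prod_{g\in G}\abs{\chi(g)}^2\Bigr)^{1/\abs{G}}\le \frac{1}{\abs{G}}\sum_{g\in G}\abs{\chi(g)}^2=1.
\]
Combined with $\abs{A}\ge 1$ this forces equality throughout AM--GM, so all the numbers $\abs{\chi(g)}^2$ are equal. Since one of them is $\chi(1)^2$, every $\abs{\chi(g)}^2$ equals $\chi(1)^2$, and reinserting into the orthogonality relation yields $\chi(1)^2=1$, contradicting the nonlinearity of $\chi$.

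The main obstacle in this plan is the Galois-theoretic step that upgrades $A$ from a mere algebraic integer to a rational integer; without it one only obtains $A\neq 0$, which is far too weak to clash with the AM--GM upper bound. Everything else (algebraic integrality of character values, the first orthogonality relation, and AM--GM) is standard, so the proof reduces to assembling these pieces around the product $A$.
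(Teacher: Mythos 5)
Your argument is correct. Note that the paper deliberately states this theorem \emph{without proof}, citing Isaacs, so there is no in-paper argument to compare against; what you have written is essentially the classical proof (Isaacs, Theorem 3.15), with one pleasant streamlining. The textbook version partitions $G\setminus\{1\}$ into classes of elements generating the same cyclic subgroup, shows the product of $\chi$ over each class is a nonzero rational integer, applies AM--GM class by class to get $\sum_{g\ne 1}\abs{\chi(g)}^2\ge \abs{G}-1$, and concludes $\chi(1)^2\le 1$ from orthogonality. You instead form the single global product $A=\prod_{g\in G}\chi(g)$, and your Galois step is sound: each $\sigma$ sends $\chi(g)$ to $\chi(g^k)$ with $\gcd(k,n)=1$, and $g\mapsto g^k$ permutes $G$, so $A\in\Q\cap\overline{\Z}=\Z$ and $\abs{A}\ge 1$; combined with AM--GM and $\tfrac{1}{\abs{G}}\sum_g\abs{\chi(g)}^2=1$ this forces equality, hence all $\abs{\chi(g)}^2$ equal to $1$ and $\chi(1)=1$, the desired contradiction. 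The only thing your global variant costs is that you must invoke the equality case of AM--GM (all terms equal), whereas the classwise version only needs the inequality itself; both are standard, and your version is arguably cleaner.
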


We are now in a position to prove the following theorem.
\begin{theorem}
There do not exist any multiplicative bases for either $Z(\H_n)$ over $R$ or for $Z(\Z S_n)$ when $n\ge 3$.
\end{theorem}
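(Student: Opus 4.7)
The plan is to reduce to the group algebra case and then, via Lemma~\ref{Lem:mult form} combined with the ambivalence and trivial centre of $S_n$, force every basis element to be $\pm 1$.

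First, specialisation at $q=1$ sends any $R$-basis of $Z(\H_n)$ to a $\Z$-basis of $Z(\Z S_n)$ and preserves products, so a multiplicative basis for $Z(\H_n)$ yields one for $Z(\Z S_n)$. It therefore suffices to rule out multiplicative bases for $Z(\Z S_n)$ when $n\ge 3$; suppose for contradiction that such a basis $B=\{b_1,\dots,b_t\}$ exists. By Lemma~\ref{Lem:mult form} each $b\in B$ has the form $b=\sum_i\varepsilon_i e_i$ with $\varepsilon_i\in\{-1,0,1\}$, so $b^2=\sum_i\varepsilon_i^2 e_i$ is an idempotent of $Z(\Q S_n)$ and, by multiplicativity, lies in $Z(\Z S_n)$. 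I would next show that the only idempotents in $Z(\Z S_n)$ are $0$ and $1$: using (\ref{idempotent:class.elts2}) with $w_1=1$, the identity-coefficient of $\sum_{i\in I}e_i$ is $\tfrac{1}{n!}\sum_{i\in I}\chi_i(1)^2$, which must be a non-negative integer at most $1$, so $\sum_{i\in I}\chi_i(1)^2\in\{0,n!\}$ and hence $I=\emptyset$ or $I=\{1,\dots,t\}$. Since $0\notin B$, $b^2=1$ and so $\varepsilon_i\in\{\pm 1\}$ for every $i$.

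To finish I would exploit ambivalence. Expanding $b=\sum_{g\in S_n}a_g g$ with $a_g\in\Z$ class-constant, the fact that $g$ and $g^{-1}$ are conjugate in $S_n$ gives $a_g=a_{g^{-1}}$, so the identity-coefficient of $b^2$ equals $\sum_g a_g a_{g^{-1}}=\sum_g a_g^2$. From $b^2=1$ this sum is $1$, forcing exactly one $a_g$ to be $\pm 1$ with the rest zero; class-constancy then forces the supporting conjugacy class to be a singleton, so the supporting element lies in $Z(S_n)$. For $n\ge 3$, $Z(S_n)=\{e\}$, so $b=\pm 1$; hence $B\subseteq\{1,-1\}$ and $|B|\le 2<p(n)=t$, contradicting $|B|=t$.

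The only step demanding real care is the idempotent classification, which is a one-line consequence of $\sum_i\chi_i(1)^2=n!$. Theorem~\ref{Thm:Burnside} does not appear to be strictly needed in this plan, though it yields an alternative route: pairing $\chi_l(w_{j_0})=0$ for a non-linear $\chi_l$ with column orthogonality, the integrality of the class-sum coefficients of $b$ produces divisibility constraints that, class by class, squeeze the admissible $\varepsilon$-vectors down to the constant ones, again forcing $B\subseteq\{1,-1\}$.
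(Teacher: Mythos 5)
Your argument is correct, and after the two steps it shares with the paper (specialising at $q=1$ to reduce to $Z(\Z S_n)$, then invoking Lemma~\ref{Lem:mult form}) it proceeds along a genuinely different route. The paper applies Burnside's theorem (Theorem~\ref{Thm:Burnside}) to produce a class $C_s$ on which some nonlinear character vanishes, and then the triangle inequality together with $\sum_i\chi_i(1)^2=n!$ bounds the $\underline{C_s}$-coefficient of every $b\in B$ strictly below $1$ in absolute value; integrality forces that coefficient to be $0$, so $\underline{C_s}$ cannot lie in the span of $B$. You instead note that $b^2=\sum_i\varepsilon_i^2e_i$ is a central idempotent lying in $\Z S_n$ (since $b^2\in B$), rule out all such idempotents except $0$ and $1$ by looking at the identity coefficient $\frac{1}{n!}\sum_{i\in I}\chi_i(1)^2$, conclude $b^2=1$ and hence $\varepsilon_i=\pm1$ for all $i$, and then use the ambivalence of $S_n$ to identify the identity coefficient of $b^2$ with $\sum_g a_g^2=1$; this forces $b=\pm g_0$ with $g_0$ in a singleton class, so $g_0\in Z(S_n)=\{e\}$ for $n\ge3$ and $B\subseteq\{1,-1\}$, which is far too small (indeed linearly dependent) to be a basis when $t=p(n)\ge 3$. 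Every step checks out: $b^2\neq 0$ because $b^2\in B$, the $e_i$ are orthogonal, and the coefficients of a central element are class-constant so $a_g=a_{g^{-1}}$ because $g$ and $g^{-1}$ share a cycle type. Your route buys two things: it avoids Burnside's theorem entirely (using only the degree formula and ambivalence, both elementary for $S_n$), and it yields sharper structural information, namely that any multiplicative basis element would have to be $\pm1$. The paper's route is shorter once Burnside is granted and transfers verbatim to any finite group possessing a nonlinear irreducible character, whereas your ambivalence step is special to groups in which every element is conjugate to its inverse.
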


\begin{proof}
Since any basis for $Z(\H_n)$ over $R$ specializes at $q=1$ to a basis for $Z(\Z S_n)$, it is enough to show that no multiplicative basis exists for $Z(\Z S_n)$. Let $\{e_1,e_2,\ldots ,e_t\}$ be a complete set of central primitive idempotents for $Z(\Q S_n)$.
 By way of contradiction, assume that $B$ is a multiplicative basis for $Z(\Z S_n)$, and let $b\in B$. Considering $b$ as an element of $Z(\Q S_n)$, we have by Lemma \ref{Lem:mult form} and (\ref{idempotent:class.elts2}) that
   \begin{align*}
    b&=\sum_{i=1}^t\varepsilon_i e_i\\
    &=\sum_{i=1}^t\varepsilon_i\left(\frac{\chi_i(1)}{n!}\sum_{j=1}^t\chi_i(w_j)\underline{C_j}\right)\\
    &=\sum_{j=1}^t\left(\sum_{i=1}^{t}\varepsilon_i\frac{\chi_i(1)\chi_i(w_j)}{n!}\right)\underline{C_j},
    \end{align*} where $\varepsilon_i\in \{-1,0,1\}$. Since $n\ge 3$, there exists at least one nonlinear irreducible character of $S_n$. Without loss of generality, let $\chi_t$ be nonlinear. Thus, by Theorem \ref{Thm:Burnside}, there exists at least one conjugacy class, say $C_s$, such that $\chi_t(w_s)=0$. Then
    the absolute value of the coefficient on $\underline{C_s}$ in $b$ is
     \begin{align*}
       \left| \sum_{i=1}^{t-1}\varepsilon_i\frac{\chi_i(1)\chi_i(w_s)}{n!} \right|
   &=  \frac{1}{n!}\left| \sum_{i=1}^{t-1}\varepsilon_i\chi_i(1)\chi_i(w_s)\right|\\
       & \le  \frac{1}{n!}\sum_{i=1}^{t-1}\left|\chi_i(1)\right|\left|\chi_i(w_s)\right|\\
       & \le  \frac{1}{n!}\sum_{i=1}^{t-1}\chi_i^2(1)\\
       &<1.
     \end{align*}
     Since $b$ is integral, it follows that $\sum_{i=1}^{t-1}\varepsilon_i\frac{\chi_i(1)\chi_i(w_s)}{n!}=0$. But then, the central element $\underline{C_s}$ cannot be written as a linear combination of the elements of $B$, which contradicts the fact that $B$ is a basis for $Z(\Z S_n)$. \end{proof}
The case $n=2$ requires a separate analysis which we give in the following theorem.
 \begin{theorem}
There exist exactly two multiplicative bases for $Z(\Z S_2)$ and no multiplicative bases for $Z(\H_2)$ over $R$.
\end{theorem}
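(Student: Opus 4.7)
The plan is to treat the two assertions separately. For $Z(\Z S_2)$, I would use Lemma~\ref{Lem:mult form} to reduce the problem to a finite check; for $Z(\H_2)$, I would lift this classification via the $q=1$ specialization and then rule out the remaining candidates by solving two simple equations inside $\H_2$.

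For $Z(\Z S_2)$, let $s=(1\ \ 2)$. The two central primitive idempotents of $Z(\Q S_2)$ are $e_1=\tfrac12(1+s)$ and $e_2=\tfrac12(1-s)$. By Lemma~\ref{Lem:mult form}, every element of a multiplicative basis lies in the set $\{\varepsilon_1 e_1+\varepsilon_2 e_2:\varepsilon_i\in\{-1,0,1\}\}$, and intersecting this nine-element set with $\Z S_2$ leaves only $\{0,\pm 1,\pm s\}$. A basis element cannot be $0$, so one enumerates the six two-element subsets of $\{\pm 1,\pm s\}$. Two of them ($\{1,-1\}$ and $\{s,-s\}$) fail $\Z$-linear independence; the two pairs containing $-1$ fail multiplicativity since $(-1)^2=1$ would then have to lie in $B$; and only $\{1,s\}$ and $\{1,-s\}$ remain, both of which are readily verified to be multiplicative.

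For $Z(\H_2)$, suppose for contradiction that $B=\{b_1,b_2\}$ is a multiplicative $R$-basis. Since $B|_{q=1}$ is automatically a multiplicative basis for $Z(\Z S_2)$, I may relabel so that $b_1|_{q=1}=1$ and $b_2|_{q=1}=\pm s$. The key structural step is to classify inside $\H_2$ the solutions of $x^2=x$ and of $x^2=1$. Granting that the only idempotents are $0$ and $1$ and that the only square roots of $1$ are $\pm 1$, the argument closes as follows: $b_1^2\in B$ specializes to $1$ while $b_2$ does not, so $b_1^2=b_1$, forcing $b_1=1$; then $b_2^2\in B$ also specializes to $1$, so $b_2^2=b_1=1$, forcing $b_2=\pm 1$ and contradicting $b_2|_{q=1}=\pm s$.

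The main obstacle is thus the classification of idempotents and involutions in $\H_2$. Writing a general element as $a+cT_1$ and expanding with $T_1^2=q+(q-1)T_1$, each of the two equations produces one linear and one quadratic condition on $a,c\in R$. The case $c=0$ is immediate and yields the expected solutions. For $c\ne 0$, eliminating $a$ using the linear condition and simplifying via the identity $(q-1)^2+4q=(q+1)^2$ collapses the quadratic condition to an equation of the form $c^2(q+1)^2=k$, with $k=1$ for idempotents and $k=4$ for involutions. Since the ring homomorphism $R\to\Z$ sending $q\mapsto -1$ kills $q+1$ but sends each such $k$ to a nonzero integer, no such $c\in R$ exists, which completes the classification and hence the proof.
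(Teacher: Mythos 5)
Your proof is correct. For $Z(\Z S_2)$ you take a genuinely different route from the paper: the paper works directly with integer coefficients $b_1=c_1(1)+c_2(12)$, $b_2=d_1(1)+d_2(12)$ and solves the resulting Diophantine conditions case by case, whereas you invoke Lemma~\ref{Lem:mult form} (which the paper only deploys for $n\ge 3$) to cut the candidate basis elements down to the integral points $\{0,\pm 1,\pm(12)\}$ among the nine signed sums of idempotents, and then finish by inspecting the six two-element subsets. Your version is shorter and makes the ``exactly two'' count transparent; the paper's is more elementary in that it never needs the rational idempotents for $n=2$. For $Z(\H_2)$ the two arguments are essentially the same computation: both use the $q=1$ specialization to force $b_1^2=b_1$ and $b_2^2=b_1$, expand in the basis $\{1,T_1\}$, and hit the same obstruction $c^2(q+1)^2=k$ with $k\in\{1,4\}$ coming from the identity $(q-1)^2+4q=(q+1)^2$. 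Your packaging of this as ``the only idempotents of $\H_2$ are $0,1$ and the only square roots of $1$ are $\pm 1$'' is a cleaner way to organize the case split, and the evaluation $q\mapsto -1$ is a tidy justification that $1/(q+1)^2$ and $4/(q+1)^2$ do not lie in $R$ (the paper simply asserts this); but the underlying algebra is identical, so only the group-algebra half counts as a genuinely new approach.
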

\begin{proof}

   It is easy to see that the two bases $\{(1), (12)\}$ and $\{(1), -(12)\}$ for $Z(\Z S_2)$ are multiplicative. To see that there are no others, let $B=\{b_1,b_2\}$ be a multiplicative basis for $Z(\Z S_2)$, where $b_1=c_1(1)+c_2(12)$ and $b_2=d_1(1)+d_2(12)$, with $c_1,c_2,d_1,d_2\in \Z$. Since $B$ is multiplicative, we have that $b_1^2=b_i$ and $b_2^2=b_j$ for some $i$ and $j$.

   Suppose that $b_1^2=b_1$ and $b_2^2=b_1$. Since $b_1^2=b_1$, equating coefficients gives $c_1^2+c_2^2=c_1$ and $2c_2c_2=c_2$, from which we conclude that $c_1=1$ and $c_2=0$. Thus $b_1=(1)$.  Then, since $b_2^2=b_1$, we have that either $d_1=\pm 1$ and $d_2=0$ or $d_1=0$ and $d_2=\pm 1$. The first case here implies that $b_2=(1)$, which is impossible since $B$ is a basis and $b_1=(1)$. The second case implies that $b_2=\pm (12)$, and so we get the two bases $\{(1),(12)\}$ and $\{(1),-(12)\}$. The other cases are similar and they yield no new bases.

   To prove that no multiplicative basis for $Z(\H_2)$ over $R$ exists is a bit more tedious. Note that $Z(\H_2)=\H_2$ and that $\{1,T_1\}$ is a basis for $\H_2$ over $R$. Suppose that $B=\{b_1,b_2\}$ is a multiplicative basis for $Z(\H_2)$ over $R$. Since $B$ must specialize at $q=1$ to a multiplicative basis for $Z(\H_2)$, we will assume that $b_1|_{q=1}=(1)$ and $b_2|_{q=1}=(12)$. The case that $b_2|_{q=1}=-(12)$ is similar. Then we can write
     \[b_1=f+gT_1\quad \mbox{and} \quad b_2=r+sT_1,\] where $f,g,r,s\in R$ with $f|_{q=1}=(1)=s|_{q=1}$ and $g|_{q=1}=0=r|_{q=1}$. Since $B$ is multiplicative, we have that $b_1^2=b_1$ or $b_1^2=b_2$. But
     \[b_1^2=f^2+g^2q+(2fg+g^2(q-1))T_1,\] and we see that $b_1^2|_{q=1}=(1)$, which implies that $b_1^2=b_1$. Similarly, $b_2^2=b_1$. Equating coefficients gives:
     \begin{align}
       f&=f^2+g^2q=r^2+s^2q\label{Eq:Coeff1}\\
       g&=2fg+g^2(q-1)=2rs+s^2(q-1).\label{Eq:Coeff2}
     \end{align}
     If $g\equiv 0$, then $b_1=fT_1$ and $f^2=f$ since $b_1^2=b_1$. Thus, $f\equiv 1$ and
     \begin{equation}\label{Eq:sumofsquares1}
     r^2+s^2q=1.
     \end{equation}
     Since $s|_{q=1}=1$, we have that $s\not \equiv 0$. Hence, from (\ref{Eq:Coeff2}), we get
     $2r+s(q-1)\equiv 0$, so that $r=-s(q-1)/2$. Substituting into (\ref{Eq:sumofsquares1}) gives $s^2=4/(q+1)^2\not \in R$, which is a contradiction.
     Thus, $g\not \equiv 0$ and from (\ref{Eq:Coeff2}) we have that $f=(1-g(q-1))/2$. Then substituting into (\ref{Eq:Coeff1}) gives $g^2=1/(q+1)^2\not \in R$, which completes the proof.
     \end{proof}
     \section{Comments and Conclusions}
     One wonders if some minor adjustment of the idea of a multiplicative basis produces any integral bases for $Z(\H_n)$ or for $Z(\Z S_n)$. Motivated by this speculation, we define the concept of a \emph{quasi-multiplicative} basis.
     \begin{definition}\label{Def:q-mult}
  A basis $B=\left\{b_1,b_2,\ldots ,b_t\right\}$ for $Z(\H_n)$ over $R$ is called \emph{quasi-multiplicative} if, for every $i$ and $j$, there exist $k$ and some nonzero element $f_{ij}\in R$, such that $b_ib_j=f_{ij}\cdot b_k$. Specialization at $q=1$ gives an analogous definition for $Z(\Z S_n)$.
\end{definition}
We see that if $f_{ij}=1$ for all $i$ and $j$ in Definition \ref{Def:q-mult}, then $B$ is a multiplicative basis. So Definition \ref{Def:q-mult} is a generalization of Definition \ref{Def:mult}, and every multiplicative basis is a quasi-multiplicative basis. In a forthcoming paper, we investigate the existence of new quasi-multiplicative bases for $Z(\H_n)$ and $Z(\Z S_n)$.

\bibliographystyle{amsalpha}

\begin{thebibliography}{A}

\bibitem{CurtisReiner87v1}
Charles Curtis and Irving Reiner.
\newblock {\em Methods of representation theory}, volume~1.
\newblock Wiley, 1987.

\bibitem{CurtisReiner87v2}
Charles Curtis and Irving Reiner.
\newblock {\em Methods of representation theory}, volume~2.
\newblock Wiley, 1987.

\bibitem{DJ87}
Richard Dipper and Gordon James.
\newblock Blocks and idempotents of {H}ecke algebras of general linear groups.
\newblock {\em Proc. London Math. Soc. (3)}, 54(1):57--82, 1987.

\bibitem{Du.GreenCor92}
Jie Du.
\newblock The {G}reen correspondence for the representations of {H}ecke
  algebras of type {$A\sb {r-1}$}.
\newblock {\em Trans. Amer. Math. Soc.}, 329(1):273--287, 1992.

\bibitem{FG:DJconj2006}
Andrew Francis and John~J. Graham.
\newblock {Centres of Hecke algebras: the Dipper-James conjecture}.
\newblock {\em J. Algebra}, 306(1):244--267, 2006.

\bibitem{FJ:newintbasis}
Andrew Francis and Lenny Jones.
\newblock {A new integral basis for the centre of the Hecke algebra of type
  $A$}.
\newblock {\em J. Algebra}, 321(3):866--878, 2009.

\bibitem{FW-centres-filtered-2008}
Andrew Francis and Weiqiang Wang, \emph{The centers of {I}wahori-{H}ecke
  algebras are filtered}, Representation Theory: Proceedings of the 4th
  International Conference on Representation Theory, Lhasa, Tibet, July 2007
  (Jianpan~Wang Zongzhu~Lin, ed.), Contemporary Mathematics, vol. 478, American
  Mathematical Society, 2009, pp.~29--37.

\bibitem{GR97}
Meinolf Geck and Rapha{\"e}l Rouquier.
\newblock Centers and simple modules for {I}wahori-{H}ecke algebras.
\newblock In {\em Finite reductive groups (Luminy, 1994)}, pages 251--272.
  Birkh\"auser Boston, Boston, MA, 1997.

\bibitem{Isaacs}
I.~Martin Isaacs.
\newblock {\em Character Theory of Finite Groups}.
\newblock AMS Chelsea Publishing, 2011.

\bibitem{Jon87}
Lenny~K. Jones.
\newblock {\em Centers of generic {Hecke} algebras}.
\newblock PhD thesis, University of Virginia, 1987.

\bibitem{Mat99}
Andrew Mathas.
\newblock Murphy operators and the centre of {Iwahori}-{Hecke} algebras of type
  {$A$}.
\newblock {\em J. Algebraic Combinatorics}, 9:295--313, 1999.

\end{thebibliography}

\end{document}